\newtheorem{theorem}{Theorem}
\newtheorem{definition}{Definition}
\newtheorem{corollary}{Corollary}
\newtheorem{lemma}{Lemma}
\newtheorem{claim}{Claim}
\newcommand{\qedclaim}{\hfill $\diamond$ \medskip}
\newenvironment{proofclaim}{\noindent{\em Proof of the claim.}}{\qedclaim}
\title{Equitable orientations of sparse uniform hypergraphs}
\author{Nathann Cohen\thanks{CNRS and Université Paris-Sud -- nathann.cohen@gmail.com}, William Lochet\thanks{Univ. Nice Sophia Antipolis, CNRS, I3S, INRIA, LIP, ENS de Lyon -- william.lochet@gmail.com}}
\begin{document}

\maketitle
\abstract{Caro, West, and Yuster studied how $r$-uniform hypergraphs can be
  oriented in such a way that (generalizations of) indegree and outdegree are as
  close to each other as can be hoped. They conjectured an existence result of
  such orientations for sparse hypergraphs, of which we present a proof.}

\section{Introduction}

In~\cite{Yuster}, Caro, West, and Yuster presented a generalization to
hypergraphs of the notion of {\em orientation} defined for graphs. Their
acknowledged purpose is to study how hypergraphs can be oriented in such a way
that minimum and maximum degree are close to each other, knowing that reaching
an additive difference of $\leq 1$ is always achievable in the case of graphs.
Identifying an {\em orientation} of an edge with a total ordering of its
elements, they define a notion of degree on oriented $r$-uniform hypergraphs.

\begin{definition}
  Let $\mathcal H$ be a $r$-uniform hypergraph, and let every $S\in\mathcal H$
  define a total order on its elements as a bijection $\sigma_S:S\mapsto
  [r]$. The degree $d_P(U)$ of a set of vertices $U\subseteq V(\mathcal H)$ with
  respect to a set of positions $P\subseteq [r]$ (where $|U|=|P|$) is equal
  to: $$d_P(U)=|\{S\in\mathcal H:U\subseteq S\text{ and }\sigma_S(U)=P\}|$$
\end{definition}

\noindent From there they define {\em equitable} orientations:

\begin{definition}
  The orientation of a $r$-uniform hypergraph $\mathcal H$ is said to be
  $p$-equitable if $|d_{P}(U)-d_{P'}(U)|\leq 1$ for any choice of $U\subseteq
  V(\mathcal H)$ and $P,P'\subseteq [r]$ of cardinality $p$. It is said to be
  {\em nearly $p$-equitable} if the looser requirement $|d_{P}(U)-d_{P'}(U)\leq
  2|$ holds.
\end{definition}

They gave proof that all hypergraphs admit a 1-equitable as well as a
$(r-1)$-equitable orientation, and also proved that some hypergraphs do not
admit a $p$-equitable orientation for all values of $p$. For a fixed value of
$p$ and $k$, they proved the existence of $r_0(p,k)$ such that every $r$-uniform
hypergraph $\mathcal H$ with $r\geq r_0(p,k)$ admits a nearly $p$-equitable
orientation whenever it is sufficiently sparse, i.e.: $$\Delta_p(\mathcal
H)=\max_{{U\subseteq V(\mathcal H)}\atop |U|=p}|\{S\in\mathcal H:U\subseteq
S\}|\leq k$$ They conjectured that a $p$-equitable orientation actually exists,
which we prove here.

\begin{theorem}
  Let $p,k$ be fixed integers. There exists $r_0$ such that for every
  $r\geq r_0$, every $r$-uniform hypergraph with $\Delta_p(\mathcal H)\leq k$
  admits a $p$-equitable orientation.
\end{theorem}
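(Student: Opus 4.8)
The plan is to first reduce $p$-equitability to a purely combinatorial distinctness condition, and then to build the required orientation one edge at a time by a probabilistic existence argument.

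\textbf{Reduction.} Fix a $p$-set $U$ and let $m_U=|\{S\in\mathcal H:U\subseteq S\}|\le\Delta_p(\mathcal H)\le k$. For any orientation the numbers $d_P(U)$, taken over the $\binom{r}{p}$ position-sets $P$ of size $p$, are nonnegative integers summing to $m_U$. As soon as $r\ge r_0$ is large enough that $\binom{r}{p}\ge k$, the requirement $|d_P(U)-d_{P'}(U)|\le 1$ for all $P,P'$ is \emph{equivalent} to asking that every $d_P(U)\in\{0,1\}$, i.e.\ that the $m_U$ edges through $U$ place $U$ in pairwise distinct position-sets. Thus it suffices to orient $\mathcal H$ so that, for every $p$-set $U$, the image sets $\sigma_S(U)$ are distinct as $S$ ranges over the edges containing $U$.

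\textbf{Warm-up $p=1$.} Here the condition says only that each vertex occupies distinct positions in the $\le k$ edges containing it. Orienting edges one after another, when a new edge $S$ is processed each $v\in S$ has at most $k-1$ forbidden positions, so for $r$ large a system of distinct representatives avoiding these positions exists by Hall's theorem. The general plan mimics this, with Hall's theorem replaced by the Lovász Local Lemma.

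\textbf{General $p$.} Order the edges arbitrarily and orient them greedily. When an edge $S$ is processed, for each $p$-set $U\subseteq S$ let $F_U\subseteq\binom{[r]}{p}$ be the at most $k-1$ position-sets already used for $U$ by previously oriented edges through $U$; I must pick a bijection $\sigma_S:S\to[r]$ with $\sigma_S(U)\notin F_U$ for \emph{every} $U\subseteq S$. Draw $\sigma_S$ uniformly at random and let $A_U$ be the bad event $\sigma_S(U)\in F_U$, so $\Pr[A_U]\le(k-1)/\binom{r}{p}$. The decisive point is that, up to negative correlations intrinsic to the permutation measure, the image $\sigma_S(U)$ is governed only by the coordinates in $U$, so $A_U$ genuinely interacts only with those $A_{U'}$ for which $U\cap U'\neq\emptyset$; there are at most $p\binom{r-1}{p-1}=o\!\left(\binom{r}{p}\right)$ such $U'$. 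Hence $\Pr[A_U]$ times the (lopsi)degree tends to $0$ as $r\to\infty$, and the Lovász Local Lemma, in its version for random permutations, guarantees for $r\ge r_0(p,k)$ a choice of $\sigma_S$ avoiding all $A_U$ at once. Processing all edges yields the desired orientation.

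\textbf{Main obstacle.} The crux is the dependency analysis of this single-edge step. Unlike the product-measure local lemma, the events $A_U$ live on the non-product space of uniform bijections, and a naive count would make every edge responsible for all $\binom{r}{p}$ of its $p$-sets, defeating the lemma. The real work is to show that only the \emph{domain} overlaps $U\cap U'\neq\emptyset$ create positive dependence, while coincidences of forbidden \emph{positions} contribute merely lopsidependencies (negative correlations) that the lopsided local lemma tolerates; this is what keeps the effective degree at $o(\binom{r}{p})$ and lets a single rigid permutation simultaneously dodge all forbidden patterns. A secondary point to check is that pairs of edges with very large overlap, which share many common $p$-sets, are still absorbed by this argument rather than needing separate coordination.
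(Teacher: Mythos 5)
Your single-edge step is where the proof breaks, and the failure is not a technicality. You claim that when an edge $S$ is processed, a uniform random $\sigma_S$ avoids all events $A_U=[\sigma_S(U)\in F_U]$ via a lopsided local lemma whose dependency graph joins only pairs with $U\cap U'\neq\emptyset$, the coincidences of forbidden positions being ``negative correlations that the lopsided lemma tolerates.'' The correlation goes the wrong way. In the uniform permutation measure, conditioning on the complement of an event that shares forbidden \emph{positions} with $A_U$ \emph{increases} the probability of $A_U$: already for $p=1$,
$P[\sigma(u)=1\mid \sigma(u')\neq 1]=\tfrac{1}{r-1}>\tfrac1r=P[\sigma(u)=1]$,
so the lopsidependency inequality $P[A_i\mid\bigwedge\bar A_j]\leq P[A_i]$ fails for the graph you propose; position overlaps must be edges of the lopsidependency graph. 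Worse, without any control on how often a given position-set is forbidden, no probabilistic lemma can rescue the step, because it can be outright impossible: take $p=1$, $k=2$, and suppose each of the $r$ vertices $v_i$ of $S$ lies in a previously oriented edge $S_i$ with $S_i\cap S=\{v_i\}$ and the $S_i$ pairwise disjoint (so $\Delta_1\leq 2$), and each $S_i$ happened to place $v_i$ at position $1$. Then every vertex of $S$ has position $1$ forbidden, and no bijection $S\to[r]$ avoids all the $F_v$; your Hall's-theorem warm-up fails for the same reason (the union of all allowed positions has size $r-1<r$). The analogous configuration exists for every $p$, so your greedy process can reach a genuinely stuck state, which no choice of $r_0(p,k)$ prevents.

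This is exactly the difficulty the paper is organized around. Its derangement lemma does include position overlaps in the lopsidependency graph (the supports are $S\cup\bigcup\mathcal L_S$), and for that reason it needs the extra hypothesis that no $p$-subset occurs in more than $r^\alpha$ of the lists $\mathcal L_S$, which is what keeps the degree at $(k+1)r^\alpha\cdot O(r^{p-1})=o(r^p)$; even then, verifying the lopsidependency inequality is real work (the conditioning on the trace $T$, Claims 2 and 3, and Lemma 4). And since a greedy pass cannot guarantee that hypothesis --- that is precisely what ``pressured'' edges are --- the paper does not orient greedily: when some edge becomes pressured by $r_1=\lfloor r^\alpha\rfloor$ oriented edges, it cancels those orientations, and it proves that this backtracking process terminates for some sequence of random choices by entropy compression (the log $(R,X,F)$, the injectivity claim, and the count $|L_M|<(cr!)^M$). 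That global mechanism --- maintaining the concentration hypothesis by erasure, plus the counting argument showing the erasures cannot recur forever --- is the larger half of the proof and is entirely absent from your proposal.
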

Note that, in the case where $r$ is big compared to $\Delta_p(\mathcal H)$, a
$p$-equitable orientation means that $d_P(U)$ is equal to $0$ or $1$ for every
choice of set of positions $P$ and set of vertices $U$.\\

In order to prove the existence of nearly $p$-equitable orientation, Caro, West,
and Yuster~\cite{Yuster} used the {L}ov\'asz Local Lemma. In \cite{Moser}, Möser
and Tardos presented an elegant algorithmic proof of it which developed the
technique of entropy compression. Our proof uses that technique and the
following Lemma (proved in Section~\ref{sec:derangements}) that counts what can
be seen as a generalization of derangements.

\begin{restatable*}{lemma}{lemmaderangements}
  \label{lem:derangement}
  Let $p,k\in \mathbb N$ and $\alpha<1$ be fixed. Let $X$ be a set of
  cardinality $r$ and let $\mathcal L_S$ be, for every $S\in \binom {X}{p}$, a
  collection of $p$-subsets of $X$ with $|\mathcal L_S|\leq k$. Then, if no
  $p$-subset occurs in more than $r^\alpha$ of the $\mathcal L_S$, a random
  permutation $\sigma$ of $X$ satisfies $\sigma(S)\not\in \mathcal L_S$ for
  every $S$ with probability $\geq (1-2k/\binom r p)^{\binom r p}=
  e^{-2k}+o(1)$ when $r$ grows large.
\end{restatable*}


\section{Algorithm}

In what follows, we assume that every finite set $S$ has an implicit enumeration
on its elements, and in particular that the edges of a hypergraph $\mathcal H$
are implicitly ordered. We will say that $i$ \textit{represents} an
element $s\in S$ when $s$ is the $i$-th element of $S$ in this implicit ordering.\\

We will orient the edges of $\mathcal H$ one by one as a (partial) equitable
orientation of $\mathcal H$, i.e. in such a way that any $p$-subset of $V$ never
appears more than once at the same position among the oriented edges. To do so,
we require the partial orientation to enforce an additional property.

\begin{definition}
  Let $\mathcal H$ be a partially oriented $r$-uniform hypergraph. We say that
  an edge $S\in\mathcal H$ is {\em pressured} by the (oriented) edges
  $S_1,\dots,S_l$ if there exists $P\in \binom {[r]}{p}$ such that for every $i$
  the set $S_i$ attributes the positions of $P$ to some $p$-set
  $s_i\subseteq S_i\cap S$.
\end{definition}

Note that Lemma~\ref{lem:derangement} ensures that a partial orientation of
$\mathcal H$ can be extended to an unoriented edge $S$, provided that no family
of more than $r^\alpha$ oriented edges pressures $S$. It asserts, for $c < e^{-2k}$ and $r$ sufficiently large, that at least
$cr!$ orientations of $S$ are admissible for this extension: we name them
{\it good} permutations of $S$. Algorithm~\ref{algo:entropy:nondet} selects an ordering randomly among
them, while ensuring that no other edge is pressured by a family of edges larger
than $r_1=\lfloor r^{\alpha}\rfloor$.\\

\begin{algorithm}[H]
  \KwData{A $r$-uniform hypergraph $\mathcal H$ with $\Delta_p(\mathcal H)\leq k$}
  \KwResult{A $p$-equitable orientation of $\mathcal H$}\vspace{1mm}
  \While{not all edges are oriented}{
    $S_1\gets$ unoriented edge of smallest index\\
    Pick for $S_1$ the orientation indexed $v_{i}$ (among $\geq cr!$ available)\\
    \uIf{some edge $S$ of $\mathcal H$ is pressured by sets $S_1,\dots,S_{r_1}$}{
      Cancel the orientation of all edges $S_i$.
    }
  }
  Return the oriented $\mathcal H$
  \label{algo:entropy:nondet}
  \caption{A non-deterministic algorithm}
\end{algorithm}

\vspace{2mm}

Algorithm~\ref{algo:entropy:nondet} starts with every edge being unoriented.  At
each step it orients the unoriented edge of smallest index by choosing a random
permutation amongst the $cr!$ first good permutations. We call {\em bad event}
the event that an edge $S\in\mathcal{H}$ is pressured by $\geq r_1$ other edges
$S_1, \dots, S_{r_1}$. If a bad event occurs after orienting $S_1$, then the
algorithm erases the orientation of the $S_1, \dots, S_{r_1}$.

It is trivial to see that Algorithm~\ref{algo:entropy:nondet} only returns $p$-equitable orientations of $\mathcal H$.
Moreover, every time the algorithm chooses a random permutation, it does so among at least $cr!$ good ones by Lemma~\ref{lem:derangement}.
Note that we need to consider large families pressuring already oriented edges: indeed, we
might have to cancel the orientation of such an edge to redefine it again later.

\begin{theorem}\label{AlgoTerm}
  Let $p,k\in\mathbb N$, $\alpha,c\in\mathbb R^*_+$ with $\alpha<1$ and $c<e^{-2k}$. For every sufficiently large $r$, there is a set of random choices for which Algorithm~\ref{algo:entropy:nondet} terminates.
\end{theorem}

In order to prove this we will analyse the
possible executions of the $M$ first steps of
Algorithm~\ref{algo:entropy:nondet}. To do this we make it deterministic and obtain
Algorithm~\ref{algo:entropy:det}, in the following way:

\begin{itemize}
\item Take as input a vector $v\in [cr!]^M$ which simulates the random choices.
\item Output a log (or {\it trace}) when it is not able to orient all edges.
\end{itemize}

\noindent We define a \textit{log of order $M$} to be a triple $(R,X,F)$ where:

\begin{itemize}
	\item $R$ is a binary word whose length lies between $M$ and $2M$.
	\item $X$ is a sequence of $h$ 7-tuples of integers $(x_1,x_2,x_3,x_4,x_5, x_6, x_7)$ where:\\[-5mm]
          \begin{center}
          \begin{tabular}{llll}
            $x_1 \leq  \binom{r}{p}$& $x_2 \leq k$& $x_3 \leq \binom{\binom{r}{p}}{r_1-1}$& $x_4 \leq k^{r_1 - 1}$\\[2mm]
            $x_5 \leq p!^{r_1-1}$& $x_6 \leq (r-p)!^{r_1-1}$& $x_7 \leq r!$&\\
          \end{tabular}
          \end{center}

	\item $F$ is an integer smaller than $(r!+1)^{\mid \mathcal{H} \mid}$ representing a partial orientation of $\mathcal H$.
\end{itemize}

The {\em log} is actually a {\em trace} of the deterministic algorithm's execution.
Its objective is to encode which orientations get canceled during the algorithm's execution.
We will show later that Algorithm~\ref{algo:entropy:det} cannot produce the same
log from two different input vectors $v,v'\in [cr!]^M$.
and that, for $M$ big enough, that the set of possible log
is smaller than $(cr!)^M$.
We now describe the log and how Algorithm~\ref{algo:entropy:det} produces it.

\begin{itemize}
        \item $R$ is initialized to the empty word. We append $1$ to $R$ whenever Algorithm~\ref{algo:entropy:det} adds a new orientation; we append 0 whenever it cancels one.
	\item Consider the following bad event: after orienting $S_1$, an edge $S\in\mathcal H$ is pressured by $r_1$ other edges $S_1, \dots, S_{r_1}$.
	We note $s_i$ the set of vertices that $S_i$ maps to $P$. We associate the following 7-tuple which identifies the sets $S_i$ as well as their orientation:
	\begin{itemize}
	 	\item $x_1< \binom r p$ represents the set $s_1$  among the $\binom{r}{p}$ possible subsets of size $p$ of $S_1$.
		\item $x_2< k$ identifies $S$ as one of the (at most $k$) edges containing $s_1$.
		\item $x_3< \binom {\binom r p} {r_1-1}$ is an integer representing the set of subsets $s_2, \dots s_{r_1}$  amongst the $\binom{r}{p}$ subsets of size $p$ of $S$.
		\item $x_4< k^{r_1 - 1}$ is an integer representing the sequence $(y_2, \dots, y_{r_1})\in [k]^{r_1-1}$ such that the $y_l$-th edge containing $s_l$ is $S_l$.
		\item $x_5< p!^{r_1-1}$ is an integer representing the sequence $(p_1, \dots, p_{r_1})$, where $p_i \in [p!]$ represents the subpermutation of $S_i$ onto $s_i$ (we know it's a permutation of $P$).
		\item $x_6< (r-p)!^{r_1-1}$ is the integer representing the sequence  $[p_2, \dots, p_{r_1}]$, where $p_i \in [(r-p)!]$ represent the subpermutation of $S_i$ onto $[r] \setminus s_i$.
		\item $x_7< r!$ is the integer representing the permutation chosen for $S_1$.

	\end{itemize}
        $X$ is the list of the $7$-tuples describing the bad events, in the
        order in which they happen.

      \item $F$ is the integer representing the partial orientation of $\mathcal
        H$ (i.e. a choice among $r!+1$ per edge of $\mathcal H$) when
        Algorithm~\ref{algo:entropy:det} returns.

\end{itemize}

This gives the following Algorithm 2:

\begin{algorithm}[H]
  \KwData{
    \begin{enumerate}
    \item A $r$-uniform hypergraph $\mathcal H$ with $\Delta_p(\mathcal H)\leq k$,\\\vspace{-2mm}
    \item A vector $v\in [cr!]^M$
    \end{enumerate}
    }
  \KwResult{A $p$-equitable orientation of $\mathcal H$, or a log of order $M$}\vspace{1mm}
  $R\gets\emptyset, X\gets\emptyset$\\
  \For{$1\leq i \leq M$}{
    $S_1\gets$ unoriented edge of smallest index\\
    Pick for $S$ the orientation indexed $v_{i}$ among $\geq cr!$ available\\
    \uIf{some edge of $\mathcal H$ is pressured by sets $S_1,\dots,S_{r_1}$}{
      Append $1$ to the end of $R$\\
      Append to $X$ a 7-tuple describing the conflict\\
      Cancel the orientation of all $r_1+1$ edges involved in the conflict
    }
    \uElseIf{all edges are oriented}{
      Return the oriented $\mathcal H$
    }
    \Else{
      Append $0$ to the end of $R$
    }
  }
  $ F \gets$ the integer representing the partial orientation of $\mathcal{H}$. \\
  Return $(R,X,F)$
  \label{algo:entropy:det}
  \caption{A deterministic algorithm}
\end{algorithm}
\vspace{2mm}

 We will show the following claim.

\begin{claim}
Let $e$ be a vector in $[cr!]^M$ from which Algorithm~\ref{algo:entropy:det} cannot produce a $p$-equitable orientation of $\mathcal H$ and outputs a log $(R,X,F)$.
We can reconstruct $e$ from $(R,X,F)$.
\end{claim}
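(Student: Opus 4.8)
The plan is to show that the log $(R,X,F)$ produced by Algorithm~\ref{algo:entropy:det} contains enough information to reverse the entire execution step by step, thereby recovering the input vector $e$ uniquely. The central idea of entropy compression is that each random choice $v_i\in[cr!]$ consumed by the algorithm must be recoverable from the trace; since the trace will be shown (in a later step) to have fewer than $(cr!)^M$ possible values, not every such vector can lead to a log, forcing termination. For this claim, however, I only need injectivity of the map $e\mapsto(R,X,F)$, which I establish by running the algorithm \emph{backwards}.

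\medskip

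First I would observe that $F$ encodes the final partial orientation of $\mathcal H$, so it tells us the state of the hypergraph at the moment the algorithm returns $(R,X,F)$, i.e. after exactly $M$ iterations of the main loop. The word $R$ records, in order, the type of each iteration: a $1$ marks an iteration that triggered a bad event (a cancellation), and a $0$ marks an iteration that merely added an orientation without conflict. Reading $R$ from its last symbol to its first thus tells us, at each reverse step, whether we must undo a plain orientation or undo a cancellation. The key structural fact to verify is that the algorithm is deterministic given $v$, so that knowing the state after step $i$ together with the symbol $R_i$ and (if $R_i=1$) the corresponding $7$-tuple in $X$ determines both the state before step $i$ and the value $v_i$ that was consumed at step $i$.

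\medskip

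The heart of the reconstruction is the reverse handling of the two cases. If the $i$-th symbol of $R$ is $0$, the algorithm simply oriented the unoriented edge of smallest index, which is determined by the current (reverse) state; to undo the step I unorient that edge, and the value $v_i$ is read off directly as the index of the orientation that was present. If the $i$-th symbol is $1$, I use the matching $7$-tuple $(x_1,\dots,x_7)$ from $X$: the coordinates $x_1,\dots,x_6$ pin down the pressured edge $S$, the pressuring edges $S_2,\dots,S_{r_1}$, and their exact orientations (via the subpermutations onto $s_i$ and onto $[r]\setminus s_i$), while $x_7$ records the permutation that had just been chosen for $S_1$, which is precisely $v_i$. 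Knowing all $r_1+1$ edges and their orientations, I reinstate them to recover the pre-cancellation state; the edge $S_1$ is identified as the unoriented edge of smallest index in that reconstructed state, consistent with the algorithm's forward behaviour. Processing the symbols of $R$ in reverse and peeling off the $7$-tuples of $X$ from last to first reconstructs the full sequence $v_M,v_{M-1},\dots,v_1$, hence $e$.

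\medskip

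The main obstacle I anticipate is verifying that the $7$-tuple genuinely suffices to reconstruct \emph{both} the identities of all $r_1+1$ involved edges \emph{and} their orientations at the instant of the conflict, so that the reversal is unambiguous. This requires checking that each coordinate's stated range ($x_1\le\binom rp$, $x_2\le k$, and so on) indexes exactly the combinatorial object it is meant to select, and in particular that $\Delta_p(\mathcal H)\le k$ is what guarantees $x_2$ and $x_4$ range over at most $k$ (resp.\ $k^{r_1-1}$) choices. I would argue that, given the reconstructed post-cancellation state, the edge $S_1$ to re-orient is forced (smallest-index unoriented edge), and that the full orientation of each $S_i$ is recovered by combining its subpermutation onto $s_i$ (coordinate from $x_5$) with its subpermutation onto the complement (from $x_6$), the two pieces together determining a unique permutation of $S_i$. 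Once this case analysis is complete, injectivity of $e\mapsto(R,X,F)$ follows immediately, which is exactly the claim.
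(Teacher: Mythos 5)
There is a genuine gap: your reconstruction runs only \emph{backwards}, but the log does not contain enough information to invert a single step from the post-state alone. Consider a step with no bad event. You claim the edge oriented at that step ``is determined by the current (reverse) state'' as the unoriented edge of smallest index --- but in the state \emph{after} the step that edge is now oriented, and the post-state does not single it out: any oriented edge whose index is smaller than every currently unoriented edge is a candidate. (For instance, if all edges are oriented when the algorithm stops, the last step could have oriented any one of them, depending on which edges earlier bad events had cancelled.) The problem is worse at bad-event steps: the coordinates $x_1,x_2$ are encoded \emph{relative to} $S_1$ ($x_1$ names $s_1$ as a subset of $S_1$, and $x_2$ names $S$ among the at most $k$ edges containing $s_1$), so decoding the $7$-tuple requires already knowing $S_1$; but $S_1$ is precisely the edge you are trying to identify, and your suggestion to recognize it afterwards as ``the unoriented edge of smallest index in that reconstructed state'' is circular, since reconstructing that state requires decoding the tuple first.

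The paper avoids this by making two passes in opposite directions. A \emph{forward} induction, starting from the empty orientation, recovers for every $z$ the set $C(z)$ of edges that carry an orientation after $z$ steps: going forward, the edge oriented at step $z+1$ is forced (it is the smallest-index edge not in $C(z)$), so $S_1$ is known, and then $x_1,\dots,x_4$ can be decoded to determine exactly which edges a bad event cancels. This pass recovers only the \emph{identities} of the oriented edges, not the orientations, since choices that were later erased are invisible when going forward. A second, \emph{backward} induction from $F$ then recovers the actual orientations $S(z)$, using $x_5,x_6,x_7$ to reinstate the permutations erased at each bad event --- this is the half of your argument that is sound. The vector $e$ is finally read off from $C(z)$ and $S(z)$ together. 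In short, your backward pass is one half of the correct proof, but without the forward computation of $C(z)$ the decoding map you describe is not well defined, so injectivity does not follow.
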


\begin{proofclaim}
First we show that we can find for every $z\leq M$, the set $C(z)$ of edges for which a orientation after $z$ steps.
We proceed by induction on $z$, starting from $C(0)=\emptyset$. At step $z+1$, Algorithm~\ref{algo:entropy:det} chooses a orientation for the smallest index $i$
not in $C(z)$. If, in $R$, the $(z+1)$-th 1 is not followed by a 0, then there is no bad event triggered by this step. In this case the set
$C(z+1)$ is the set $C(z) \cup {i}$. Suppose now that the $(z+1)$-th 1 is followed by a sequence of 0: this means that the
algorithm encountered a bad event. By looking at the number of sequences of 0 in $R$ before the $z+1$-th 1 we can deduce the
number of bad events before this one. This mean we can find, in $X$, the 7-tuple $(x_1, x_2, x_3, x_4, x_5, x_6, x_7)$ associated to
this bad event. We take the following notations for the bad event : After orienting $S_1$, an edge $S$ of $H$ is pressured by $r_1$ other edges, $S_1, \dots S_{r_1}$.
We note $s_i$ the subset of $S_i$ that are sent to $P$. $S_1$ is the last edge we oriented (known by induction), $x_1$ indicates
$s_1$ amongst the subset of $S$, $x_2$ indicates $S$ amongst the set of edges containing $s_1$, $x_3$ indicates the $s_d$ for $d \in [2..r_1]$,
and $x_4$ indicates the $S_d$ for $d \in [2..r_1]$. In this case the set $C(z+1)$ is the set $C(z)$ for which we removed
all the $ES_d$ for $d \in [2..r_1]$.

We can now deduce the set $S(z)$ of all chosen orientations after $z$ steps.
We also proceed by induction, this time starting from step $M$. By construction, $F$ is exactly the integer
representing the partial orientation of $\mathcal H$ at step $M$. If the last letter of $R$ is a $1$, this means the last step of the
algorithm consisted only of choice of a orientation. We just showed that we know which orientation was chosen after $M-1$
steps, so we can deduce the state of all orientation after $M-1$ steps. If the last letter is a 0, Algorithm~\ref{algo:entropy:det}
encountered a bad event. Keeping the notation of the bad event, let $(x_1, x_2, x_3, x_4, x_5, x_6, x_7)$ be the 7-tuple associated to this bad event.
Like before	$x_1$, $x_2$, $x_3$, $x_4$ and the knowledge of $C(M-1)$ allow us to know which permutations Algorithm~\ref{algo:entropy:det} erased at this
step. Moreover $x_7$ tells us the random choice made by Algorithm~\ref{algo:entropy:det} and from $x_7$ and $x_1$ we can deduce $P$. For each $s_i$
we know the orientation chosen for $S_i$ at the step $M-1$ sends $P$ onto $s_i$, from $x_5$ we deduce exactly in which order and from
$x_6$ we get the rest of the orientation. Therefore we can deduce the set of chosen orientations before the bad event occurred.
With the sets $S(z)$ and $C(z)$ known for all $z \leq M$ we can easily deduce $e$.
\end{proofclaim}

\noindent The previous claim has the following corollary:
\begin{corollary}
  If $\mathcal H$ admits no $p$-equitable orientation, then Algorithm~\ref{algo:entropy:det} defines an injection from the set of vectors $[cr!]^M$ into $L^M$.
\end{corollary}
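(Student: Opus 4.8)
The plan is to read this corollary as a packaging of the preceding Claim together with the observation that, under the stated hypothesis, Algorithm~\ref{algo:entropy:det} never returns an orientation. First I would argue that every partial orientation maintained during the execution is $p$-equitable: at each iteration the permutation chosen for $S_1$ is taken among the \emph{good} ones, which are by definition exactly the extensions of the current partial orientation that keep every $p$-subset from occupying the same position twice. Consequently, were the algorithm ever to reach its ``Return the oriented $\mathcal H$'' line, it would output a genuine $p$-equitable orientation of $\mathcal H$. Since by hypothesis no such orientation exists, that branch is never taken.

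The second step is to deduce that the map $v\mapsto(R,X,F)$ is well defined on all of $[cr!]^M$ and lands in $L^M$. Because Lemma~\ref{lem:derangement} guarantees at least $cr!$ good permutations at every orientation step --- the cancellation rule being precisely what enforces the hypothesis of that lemma on the already-oriented edges --- the algorithm never stalls for lack of an available choice, and by the previous paragraph it never exits successfully. Hence, for every input vector $v\in[cr!]^M$, the for-loop runs through all $M$ iterations and returns a triple $(R,X,F)$. I would then verify that this triple is indeed a log of order $M$: the word $R$ records the sequence of additions and cancellations, so its length lies between $M$ and $2M$; each $7$-tuple appended to $X$ satisfies the stated ranges by construction (they index, in turn, a $p$-subset of $S_1$, an edge containing it, the subsets $s_2,\dots,s_{r_1}$, the corresponding edges, the two families of subpermutations, and the permutation of $S_1$); and $F$ encodes the final partial orientation among the $(r!+1)^{\mid\mathcal H\mid}$ possibilities.

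Finally, injectivity is nothing more than the content of the Claim already established: from any output log $(R,X,F)$ one reconstructs, via the sets $C(z)$ and $S(z)$, the unique input vector $e$ that produced it, so two distinct vectors cannot yield the same log. Combining the three steps gives the desired injection $[cr!]^M\hookrightarrow L^M$.

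The only point requiring care --- rather than a genuine obstacle --- is the first step, where one must simultaneously rule out that the algorithm stalls (impossible, by the $cr!$ good permutations supplied by Lemma~\ref{lem:derangement}) and that it halts with a valid output (impossible, by the assumed non-existence of a $p$-equitable orientation). Once both failure modes are excluded, a full log is produced on every input and the corollary follows immediately from the Claim.
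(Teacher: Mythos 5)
Your proposal is correct and matches the paper's approach: the paper states this corollary without separate proof, as an immediate consequence of the preceding Claim, and your argument simply spells out that deduction (the non-existence hypothesis plus the $cr!$ good permutations from Lemma~\ref{lem:derangement} make the log-producing map total on $[cr!]^M$, and the Claim's reconstruction gives injectivity). The additional verifications you include (the partial orientations stay $p$-equitable, the output triple satisfies the definition of a log) are exactly the facts the paper establishes in the surrounding text before the Claim.
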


Let $L_M$ be the set of all possible logs after $M$ steps of Algorithm 2.
To show Theorem \ref{AlgoTerm} it suffices to show that, for $M$ big enough, $| L_M |$
is strictly smaller than $(cr!)^{M}$.

\begin{lemma}
For $M$ big enough,  $| L_M | < (cr!)^{M}$.
\end{lemma}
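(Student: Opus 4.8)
The plan is to bound $|L_M|$ by the product of the numbers of possible values for the three components $R$, $X$ and $F$ of a log, and then to check that the sparsity built into $r_1=\lfloor r^\alpha\rfloor$ forces this product below $(cr!)^M$ once $r$ and then $M$ are large. Counting each component: a binary word $R$ of length at most $2M$ can be chosen in at most $2^{2M+1}$ ways; the integer $F$ encodes a partial orientation, so it ranges over at most $(r!+1)^{|\mathcal H|}$ values, which is a constant independent of $M$; and a sequence $X$ of $h$ seven-tuples obeying the stated coordinate bounds can be chosen in at most $B^h$ ways, where
$$B=\binom{r}{p}\cdot k\cdot\binom{\binom{r}{p}}{r_1-1}\cdot k^{r_1-1}\cdot(p!)^{r_1-1}\cdot((r-p)!)^{r_1-1}\cdot r!$$
is the product of the seven bounds.

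Next I would pin down $h$, the number of recorded bad events. Each iteration of Algorithm~\ref{algo:entropy:det} creates exactly one orientation, so at most $M$ orientations are produced over the whole run. Every bad event cancels $r_1+1$ currently oriented edges, and the map sending each cancellation to the unique input coordinate that had created the cancelled orientation is injective into $\{1,\dots,M\}$, since a coordinate produces a single orientation which can be cancelled at most once. Hence $h(r_1+1)\le M$, that is $h\le M/(r_1+1)$ --- this is exactly the information carried by the constraint $|R|\le 2M$. As $B\ge 1$, combining the three counts gives
$$|L_M|\le 2^{2M+1}\,(r!+1)^{|\mathcal H|}\,B^{M/(r_1+1)}.$$

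The crux is to estimate $B^{1/(r_1+1)}$, and this is where the hypothesis $\alpha<1$ is indispensable. Using $\binom{\binom{r}{p}}{r_1-1}\le\binom{r}{p}^{r_1-1}$ together with the identity $\binom{r}{p}\,p!\,(r-p)!=r!$, the product collapses to $B\le\binom{r}{p}\,k^{r_1}\,(r!)^{r_1}$, so that
$$\frac{B^{1/(r_1+1)}}{r!}\le\frac{\binom{r}{p}^{1/(r_1+1)}\,k^{r_1/(r_1+1)}}{(r!)^{1/(r_1+1)}}.$$
Here the numerator stays bounded, since $\binom{r}{p}^{1/(r_1+1)}\to 1$ and $k^{r_1/(r_1+1)}\to k$, whereas the denominator diverges: $\tfrac{\log(r!)}{r_1+1}\sim r^{1-\alpha}\log r\to\infty$ because $\alpha<1$, so $(r!)^{1/(r_1+1)}\to\infty$. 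Consequently $B^{1/(r_1+1)}=o(r!)$, and I expect this single limit to be the main obstacle. Note in particular that replacing the denominator $r_1+1$ by $r_1$ would make the exponent of $r!$ in $B^{1/r_1}$ equal to $1$ and the whole argument would collapse, so the extra cancelled edge accounting for the ``$+1$'' is genuinely needed.

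Finally I would assemble the pieces. Setting $\rho:=4\,B^{1/(r_1+1)}/(c\,r!)$, the estimates above give $\rho\to 0$ as $r\to\infty$, and rewriting the bound on $|L_M|$ (using $2^{2M+1}=2\cdot 4^M$ and $4^M B^{M/(r_1+1)}=(cr!)^M\rho^M$) yields
$$|L_M|\le 2\,(r!+1)^{|\mathcal H|}\,(cr!)^M\,\rho^M.$$
Fixing any $r$ large enough that $\rho<1$, the factor $2\,(r!+1)^{|\mathcal H|}$ is a constant while $\rho^M\to 0$, so for all sufficiently large $M$ we obtain $|L_M|<(cr!)^M$, which is the claim. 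Together with the injection of the preceding corollary this contradicts the existence of a distinct log for every input vector, forcing Algorithm~\ref{algo:entropy:det}, and hence Algorithm~\ref{algo:entropy:nondet}, to terminate, thus proving Theorem~\ref{AlgoTerm}.
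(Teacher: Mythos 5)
Your overall architecture is the same as the paper's (bound $R$, $X$, $F$ separately, compare a per-step factor against $cr!$, let $r$ and then $M$ grow), but there is a genuine gap, and it sits exactly where you place the whole weight of the argument: the claim that every bad event cancels $r_1+1$ oriented edges, hence $h\le M/(r_1+1)$. A bad event is the existence of an edge $S$ pressured by $r_1$ \emph{oriented} edges $S_1,\dots,S_{r_1}$; the pressured edge $S$ itself need not be oriented. In fact the typical bad event has $S$ unoriented: the invariant exists precisely so that Lemma~\ref{lem:derangement} can be applied when the algorithm later reaches $S$, and the paper's remark that one must \emph{also} watch pressured edges that are already oriented shows both cases occur. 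So only $r_1$ cancellations per bad event are guaranteed, and the only legitimate bound is $h\le M/r_1$ --- which is what the paper uses (``each bad event removes $r_1$ of those''). (Algorithm~\ref{algo:entropy:det} does say ``all $r_1+1$ edges involved in the conflict'', but an unoriented $S$ contributes no orientation to cancel; Algorithm~\ref{algo:entropy:nondet}, the reconstruction claim, and the paper's proof of this lemma all count $r_1$.) Since you yourself point out that with $M/r_1$ in the exponent your estimate collapses ($B^{1/r_1}\approx k\,r!$ while $4k/c>1$), this is not a cosmetic slip: your proof as written does not go through.

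The repair is not the ``$+1$'' but a sharper bound on the $x_3$ coordinate, which is the paper's key step and which you replaced by the crude $\binom{\binom{r}{p}}{r_1-1}\le\binom{r}{p}^{r_1-1}$. Using $\binom{n}{m}\le(ne/m)^m$ instead gives an extra factor $e^{r_1-1}(r_1-1)^{-(r_1-1)}$, so that
$$B\le \binom{r}{p}\,k^{r_1}\,e^{r_1-1}\,(r_1-1)^{-(r_1-1)}\,(r!)^{r_1},\qquad\frac{B^{1/r_1}}{r!}\le r^{p/r_1}\cdot\frac{k\,e}{(r_1-1)^{(r_1-1)/r_1}},$$
and the right-hand side tends to $0$ as $r\to\infty$, because $r^{p/r_1}\to1$ (this is where $\alpha<1$ enters, via $r_1\sim r^\alpha\gg\log r$) while $(r_1-1)^{(r_1-1)/r_1}\to\infty$. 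With this replacement, your final assembly --- fixing $r$ large so that $\rho<1$, then letting $\rho^M$ absorb the constant $(r!+1)^{|\mathcal H|}$ as $M\to\infty$ --- goes through verbatim with $r_1$ in place of $r_1+1$, and is then essentially the paper's proof.
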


\begin{proof}
We will compute a bound for $| L_M |$.
$R$ is a binary word of size $\leq 2M$, and there are at most $4^M$ such words.
$X$ is a list of 7-tuples. As Algorithm 2 made $M$ choices and each bad event removes $r_1$ of those,
there exist at most $\frac{M}{r_1}$ bad events. Moreover, for each 7-tuple,
$(x_1,x_2,x_3,x_4,x_5, x_6, x_7)$ we have $x_1 \leq  \binom{r}{p}$, $x_2 \leq k$,
$x_3 \leq \binom{\binom{r}{p}}{r_1-1}$, $x_4 \leq k^{r_1 - 1}$, $x_5 \leq p!^{r_1-1}$, $x_6 \leq (r-p)!^{r_1-1}$, $x_7 \leq r!$.
Using the bounds $\binom{n}{k} \leq (\frac{n\cdot e}{k})^k$ or $\binom{n}{k} \leq n^k$ we get the following bound.

\begin{align*}
|X| &\leq \left( r^p \cdot k \cdot \left(\frac{r^p \cdot e}{r_1-1}\right)^{r_1 - 1} \cdot (k \cdot p! \cdot (r-p)!)^{r_1 - 1 } \cdot r! \right)^{M/r_1}\\
&\leq \frac{\left(r! \cdot (r^p)^{r_1} \cdot (r-p)!^{r_1 - 1} \right)^{M/r_1} \cdot (k \cdot e \cdot p!)^{M}}{(r_1-1)^{M(r_1-1)/r_1}}\\
& \leq \left[r^p \cdot r!^{r_1} \cdot \left(\frac{r^p}{r(r-1)\dots (r-p +1)}\right)^{r_1-1} \right]^{M/r_1}
\cdot \left( \frac{k \cdot e \cdot p!}{(r_1-1)^{(r_1-1)/r_1}} \right)^M\\
\intertext{We can assume $r  > 2p$, and so $\frac{r}{r-p+1}< 2$:}
& \leq r!^M \cdot \left( r^{p/r_1} \cdot 2^{p} \cdot\frac{k \cdot e \cdot p!}{(r_1-1)^{(r_1-1)/r_1}} \right)^M
\end{align*}
As $F \leq (r!+1)^{| \mathcal{H} |}$, we get the following bound on $| L_M |$:

$$ | L_M | \leq r!^M \cdot \left(4 \cdot r^{p/r_1} \cdot 2^{p} \cdot\frac{k \cdot e \cdot p!}{(r_1-1)^{(r_1-1)/r_1}} \right)^M \cdot (r!+1)^{| \mathcal{H} |}  $$

\end{proof}


\section{Derangements}
\label{sec:derangements}

The results of this section are based on a lemma from Erd{\H{o}}s and
Spencer~\cite{MR1095369}:

\begin{lemma}[Lopsided {L}ov\'asz Local Lemma]
  \label{llll}
  Let $A_1,\dots,A_m$ be events in a probability space, each with probability at
  most $p$. Let $G$ be a graph defined on those events such that for every
  $A_i$, and for every set $S$ avoiding both $A_i$ and its neighbours, the
  following relation holds:
  $$P[A_i|\bigwedge_{A_j\in S}\bar A_j] \leq P[A_i]$$
  Then if $4dp\leq 1$, all the events can be avoided simultaneously:
  $$P[\bar A_1\wedge \dots \wedge \bar A_m]\geq (1-2p)^m> 0$$
\end{lemma}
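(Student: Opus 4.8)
The plan is to prove this by the classical inductive argument for the Lovász Local Lemma, specialized to the symmetric lopsided setting with the uniform weights $x_i = 2p$; throughout, let $d$ denote the maximum degree of the dependency graph $G$. The first observation is that the hypothesis $4dp\le 1$ is exactly what makes these uniform weights feasible. Indeed, since $p\le \tfrac1{4d}$, Bernoulli's inequality gives $(1-2p)^d \ge 1-2dp \ge \tfrac12$, and therefore $P[A_i] \le p \le 2p(1-2p)^d = x_i\prod_{A_j\sim A_i}(1-x_j)$ for every $i$. This weighted feasibility condition is what drives the whole induction.

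The heart of the proof is the following claim, established by induction on $|S|$: for every event $A_i$ and every set $S$ of events with $A_i\notin S$, one has $P[A_i\mid\bigwedge_{A_j\in S}\bar A_j]\le 2p$. The base case $S=\emptyset$ is immediate from $P[A_i]\le p$. For the inductive step I would split $S$ into the set $S_1$ of neighbours of $A_i$ in $G$ and the set $S_2 = S\setminus S_1$ of its non-neighbours, and write the conditional probability as the ratio
$$P\Bigl[A_i\mid\bigwedge_{A_j\in S}\bar A_j\Bigr] = \frac{P\bigl[A_i\wedge\bigwedge_{A_j\in S_1}\bar A_j \mid \bigwedge_{A_k\in S_2}\bar A_k\bigr]}{P\bigl[\bigwedge_{A_j\in S_1}\bar A_j \mid \bigwedge_{A_k\in S_2}\bar A_k\bigr]}.$$
For the numerator, dropping the conjunction over $S_1$ and then invoking the lopsidependency hypothesis (legitimate precisely because $S_2$ contains neither $A_i$ nor any of its neighbours) yields $P[A_i\mid\bigwedge_{A_k\in S_2}\bar A_k]\le P[A_i]\le p$.

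For the denominator I would order the elements of $S_1$ and expand it as a telescoping product; each factor has the form $1-P[A_{j_\ell}\mid\cdots]$, and its conditioning set (the already-processed neighbours together with all of $S_2$) has size strictly less than $|S|$ while still excluding $A_{j_\ell}$, so the induction hypothesis bounds each such conditional probability by $2p$. Since $|S_1|\le d$, the denominator is at least $(1-2p)^d\ge\tfrac12$, and the ratio is at most $p/\tfrac12 = 2p$, closing the induction. With the claim in hand, the conclusion follows from the chain rule,
$$P[\bar A_1\wedge\cdots\wedge\bar A_m] = \prod_{i=1}^m\bigl(1-P[A_i\mid\bar A_1\wedge\cdots\wedge\bar A_{i-1}]\bigr)\ge (1-2p)^m,$$
and $4dp\le 1$ forces $p<\tfrac12$, so the product is strictly positive.

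The step I expect to require the most care is the inductive bound on the denominator: one must verify that, at each stage of the telescoping product, the conditioning set genuinely has cardinality strictly smaller than $|S|$ and still omits the event being conditioned on, so that the induction hypothesis truly applies. The degenerate case $S_1=\emptyset$ (where the denominator equals $1$ and the estimate collapses to $P[A_i\mid\bigwedge_{A_k\in S_2}\bar A_k]\le p\le 2p$) is worth isolating to keep the argument clean.
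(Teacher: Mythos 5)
The paper does not prove this lemma at all---it is imported verbatim from Erd{\H{o}}s and Spencer~\cite{MR1095369}---and your write-up is precisely the classical inductive proof of the symmetric lopsided local lemma found in that literature, so there is no internal proof to compare against beyond confirming correctness. Your argument is correct: the decomposition $S=S_1\cup S_2$, the application of the lopsidependency hypothesis to the numerator (legitimate exactly because $S_2$ avoids $A_i$ and its neighbours), the telescoped denominator bounded below by $(1-2p)^{d}\geq\tfrac12$ via the induction hypothesis and Bernoulli's inequality, and the closing chain rule all check out; the only points worth a remark are that strict positivity of the conditioning events $P[\bigwedge_{A_j\in S}\bar A_j]>0$ should formally be carried along in the same induction (it follows since every chain-rule factor is $\geq 1-2p>0$), and that your observation that $4dp\leq 1$ forces $p<\tfrac12$ tacitly assumes $d\geq 1$ (the degenerate case $d=0$, where the stated bound can fail as written, is an artifact of the lemma's statement rather than of your proof).
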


Thanks to this result we can prove the following, which can be seen as a
generalization of the fact that a random permutation of $n$ points is a
derangement with asymptotic probability $n!/e$.

\lemmaderangements
\begin{proof}
  For every $S\in \binom X p$, we define the {\it bad event} $B_S$ with:
  $$B_S = \bigvee_{S'\in \mathcal L_S}[\sigma(S) = S']$$
  Each $B_S$ has a probability $P[B_S]\leq k/\binom r p $. On these bad events we define a lopsidependency graph (see \cite{MR1095369})
  $G_B$ with the following adjacencies:
  $$\left\{(S_1,S_2): S_1,S_2\in \binom X p\text{ s.t. }\Big[ S_1\bigcup \mathcal L_{S_1}\Big]\bigcap \Big[ S_2\bigcup \mathcal L_{S_2}\Big] \neq \emptyset\right\}$$

  As a $p$-subset of $X$ intersects at most $O(r^{p-1})$ others, and noting that
  every $p$-subset can occur at most $r^\alpha$ times, we have that:

  $$\Delta(G_B)\leq (k+1)r^\alpha\times O(r^{p-1}) = o(r^p)$$

  In order to apply the Lopsided {L}ov\'asz Local Lemma to the events $B_S$ and
  graph $G_B$, we must ensure for every $S\in \binom X p$ and $S_B\subseteq
  V(G_B)\backslash N_{G_B}[B_S]$ that:
  \begin{equation}
    \label{eq:bound}
    P(B_S|\bigwedge_{B_{S'}\in S_B}\bar B_{S'})\leq P(B_S)
  \end{equation}
  Indeed, if we denote by $T$ (for {\it trace}) the number of elements of
  $\bigcup_{B_{S'}\in S_B} S'$ sent by the random permutation $\sigma$ into
  $\bigcup \mathcal L_S$:
  \begin{align*}
    P(B_S) &=\sum_t P(B_S\mid T=t)P(T=t)\\
    P(B_S|\bigwedge_{B_{S'}\in S_B}\bar B_{S'}) &= \sum_t P(B_S\mid T=t,\bigwedge_{B_{S'}\in S_B}\bar B_{S'})P(T=t\mid\bigwedge_{B_{S'}\in S_B}\bar B_{S'})\\
    \intertext{As $\bigcup \mathcal L_S$ is disjoint from the $\bigcup \mathcal L_{S'},\forall B_{S'}\in S_B$, we have $P(B_S\mid T=t,\bigwedge_{B_{S'}\in S_B}\bar B_{S'})=P(B_S\mid T=t)$ and:}\\[-7mm]
           &= \sum_t P(B_S\mid T=t)P(T=t\mid\bigwedge_{B_{S'}\in S_B}\bar B_{S'})
  \end{align*}
  In order to prove (\ref{eq:bound}), we will first need the following observation:

  \begin{claim}
    \label{claim:decrease}
    $P(B_S\mid T=t)$ is a decreasing function of $t$.
  \end{claim}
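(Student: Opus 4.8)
The plan is to reduce the claim to a monotone-likelihood-ratio comparison between two hypergeometric distributions. Write $L=\bigcup\mathcal L_S$ and $\ell=|L|$, and let $W=\bigcup_{B_{S'}\in S_B}S'$ with $w=|W|$. Since every $B_{S'}\in S_B$ is non-adjacent to $B_S$ in $G_B$, the disjointness $[S\cup\mathcal L_S]\cap[S'\cup\mathcal L_{S'}]=\emptyset$ gives $S'\cap S=\emptyset$, hence $W\cap S=\emptyset$, i.e. $W\subseteq X\setminus S$. First I would record that $T=|\sigma(W)\cap L|$, since $T$ counts the elements of $W$ that $\sigma$ sends into $\bigcup\mathcal L_S=L$.

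The main device is to factor the uniform permutation $\sigma$ through the image set $A:=\sigma(S)$: the set $A$ is a uniform $p$-subset of $X$, and, conditionally on $A$, the restriction $\sigma|_{X\setminus S}$ is a uniform bijection onto $X\setminus A$. As $W\subseteq X\setminus S$, this makes $\sigma(W)$ a uniform $w$-subset of $X\setminus A$. Two consequences follow. Unconditionally $\sigma(W)$ is a uniform $w$-subset of $X$, so $T$ is hypergeometric with $P(T=t)=\binom{\ell}{t}\binom{r-\ell}{w-t}/\binom{r}{w}$. Conditionally on $B_S$, the image $A$ lies in $\mathcal L_S$, hence $A\subseteq L$ and $|L\setminus A|=\ell-p$, $|X\setminus A|=r-p$; since $\sigma(W)\subseteq X\setminus A$ is a uniform $w$-subset and $T=|\sigma(W)\cap(L\setminus A)|$, we get $P(T=t\mid B_S)=\binom{\ell-p}{t}\binom{r-\ell}{w-t}/\binom{r-p}{w}$. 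Crucially this law does not depend on which particular forbidden set $A\in\mathcal L_S$ is realised, so it remains valid after conditioning on all of $B_S$.

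Finally I would invoke Bayes in the form $P(B_S\mid T=t)=P(T=t\mid B_S)\,P(B_S)/P(T=t)$; as $P(B_S)$ is independent of $t$, it suffices to prove the likelihood ratio $P(T=t\mid B_S)/P(T=t)$ is decreasing in $t$ on the support of $T$. Dividing the two hypergeometric expressions, the common factor $\binom{r-\ell}{w-t}$ cancels and one is left with $\binom{\ell-p}{t}/\binom{\ell}{t}$ times a constant in $t$; this equals $\prod_{j=0}^{p-1}(\ell-t-j)\,/\,\prod_{j=0}^{p-1}(\ell-j)$, a product of nonnegative factors each nonincreasing in $t$, hence decreasing. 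This gives the claim. (The case $\mathcal L_S=\emptyset$ is trivial, since then $B_S$ never occurs and $P(B_S\mid T=t)\equiv 0$.)

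The only genuine obstacle is setting up the conditioning correctly: one must compare the conditional and unconditional laws of $T$ via Bayes rather than estimate $P(B_S\mid T=t)$ directly, and verify that conditioning on $B_S$ shifts the hypergeometric parameters by exactly $p$, which holds precisely because every realised image $A\in\mathcal L_S$ satisfies $A\subseteq L$. Once these identifications are in place, the monotonicity is an elementary manipulation of binomial coefficients.
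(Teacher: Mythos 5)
Your proof is correct, and while it shares the paper's overall plan (condition on the trace $T$, compute the relevant probabilities exactly, and reduce the claim to an elementary monotonicity of binomial coefficients), the execution is genuinely different: the paper attacks $P(B_S\mid T=t)$ head-on, writing it as $\sum_{S'\in\mathcal L_S}P(\sigma(S)=S'\mid T=t)$ and asserting a closed form whose consecutive ratio it then bounds by $1$, whereas you invert the conditioning by Bayes and compare the unconditional law of $T$ (hypergeometric with parameters $(\ell,\,r-\ell;\,w)$, in your notation $\ell=|L|$, $w=|W|$) with its law conditioned on $B_S$ (hypergeometric with parameters $(\ell-p,\,r-\ell;\,w)$ on the ground set $X\setminus A$), the key observation being that this conditional law is the same for every realised $A\in\mathcal L_S$ precisely because $A\subseteq L$. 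Your route buys something real: it yields the closed form $P(B_S\mid T=t)=\frac{|\mathcal L_S|}{\binom{r-w}{p}}\cdot\binom{\ell-p}{t}\big/\binom{\ell}{t}$, which passes sanity checks, while the paper's printed formula $\frac{|\mathcal L_S|}{\binom{r-t}{p}}\cdot\binom{r'-p}{t}\big/\binom{r'}{t}$ (with $r'=|W|=w$) does not: taking $r=3$, $p=1$, $S=\{1\}$, $\mathcal L_S=\{\{2\}\}$, $W=\{3\}$, a direct count gives $P(B_S\mid T=0)=\frac12$ whereas the paper's expression gives $\frac13$, and the paper's vanishing threshold $t>r'-p$ is also off (the correct threshold is $t>\ell-p$); the paper's formula appears to conflate $|W|$ with $\ell=|\bigcup\mathcal L_S|$ and $r-|W|$ with $r-t$, and its conclusion survives only because that expression, too, happens to be decreasing in $t$. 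Two small points you should tighten: the factors $\ell-t-j$ in your final product can be negative when $\ell-p<t\leq\ell$, but then the $p$ consecutive integers $\ell-t-p+1,\dots,\ell-t$ include $0$, so the product vanishes, consistently with $\binom{\ell-p}{t}=0$, and monotonicity is unharmed; and the Bayes step should be restricted to those $t$ in the support of $T$, as you implicitly do.
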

  \begin{proofclaim}
    We compute the value of $P(B_S\mid T=t)$ exactly, denoting by $r'\leq r$ the
    cardinality of $\bigcup_{B_{S'}\in S_B} S'$. It is equal to 0 when $t>r'-p$,
    and is otherwise equal to:
    \begin{align*}
      P(B_S\mid T=t)&=\sum_{S'\in\mathcal L_S}P(\sigma(S)=S'\mid T=t)\\
      &=\frac {|\mathcal L_S|} {\binom {r-t} p}\frac {\binom {r'-p} t} {\binom {r'} t}\\
      &=\left(|\mathcal L_S|\frac {(r'-p)!p!} {r'!}\right)\left(\frac {(r-p-t)!(r'-t)!} {(r'-p-t)!(r-t)!}\right)\\
      &=P(B_S\mid T=t-1)\left(\frac {(r'-p-t+1)} {(r-p-t+1)}\frac {(r-t+1)} {(r'-t+1)}\right)\\
      &\leq P(B_S\mid T=t-1)\\[-12mm]
    \end{align*}
  \end{proofclaim}

  Additionally, we will prove a relationship on the members of $\sum_t P(T=t)$ and on those of $\sum_t P(T=t\mid\bigwedge_{B_{S'}\in S_B}\bar B_{S'})$, which both sum to~1:

  \begin{claim}
    \label{claim:ratio}
    If $P(T=t\mid\bigwedge_{B_{S'}\in S_B}\bar B_{S'})$ is nonzero, then $$\frac {P(T=t+1)} {P(T=t)} \leq \frac {P(T=t+1|\bigwedge_{B_{S'}\in S_B}\bar B_{S'})} {P(T=t\mid\bigwedge_{B_{S'}\in S_B}\bar B_{S'})}$$
  \end{claim}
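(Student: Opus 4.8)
The plan is to reformulate the inequality as a monotonicity statement and then establish it by a coupling of random injections. Write $\mathcal{A}=\bigwedge_{B_{S'}\in S_B}\bar B_{S'}$, $W=\bigcup\mathcal{L}_S$ and $U=\bigcup_{B_{S'}\in S_B}S'$, with $r'=|U|$. Clearing denominators in the claimed inequality (for the values of $t$ where both sides are positive; the case $P(T=t+1)=0$ is trivial) and substituting $P(T=s\mid\mathcal{A})=P(\mathcal{A}\mid T=s)\,P(T=s)/P(\mathcal{A})$, the common factor $P(T=t)\,P(T=t+1)/P(\mathcal{A})$ cancels and the statement becomes exactly $P(\mathcal{A}\mid T=t)\leq P(\mathcal{A}\mid T=t+1)$. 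So it suffices to show that $P(\mathcal{A}\mid T=t)$ is non-decreasing in $t$.

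Next I would reduce to random injections. Both $T$ and $\mathcal{A}$ depend on $\sigma$ only through the restriction $\sigma|_U$, which is a uniform random injection $\phi\colon U\to X$, so I may work with $\phi$ alone. The crucial geometric input is that the events of $S_B$ are non-adjacent to $B_S$ in $G_B$, which forces $[S'\cup\mathcal{L}_{S'}]\cap[S\cup\mathcal{L}_S]=\emptyset$; hence every pattern in $\mathcal{L}_{S'}$ is a $p$-subset of $V:=X\setminus W$. Consequently $B_{S'}$ can occur only when $\phi(S')\subseteq V$, i.e. when every point of $S'$ is mapped into $V$: as soon as one point of $S'$ lands in $W$, the event $\bar B_{S'}$ holds automatically.

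Then I would condition on $T=t$ and decompose. Given $T=t$, the set $U_W:=\phi^{-1}(W)$ is a uniformly random $t$-subset of $U$, and independently $\phi|_{U_V}$ (with $U_V:=U\setminus U_W$, so $|U_V|=r'-t$) is a uniform random injection into $V$. By the previous paragraph $\mathcal{A}$ depends only on $U_V$ and $\phi|_{U_V}$, whence $P(\mathcal{A}\mid T=t)=\operatorname{avg}_{|A|=r'-t} g(A)$, the average being over $(r'-t)$-subsets $A\subseteq U$ and $g(A):=P_\psi\big[\psi(S')\notin\mathcal{L}_{S'}\text{ for all }B_{S'}\in S_B\text{ with }S'\subseteq A\big]$ for $\psi\colon A\to V$ a uniform random injection. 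It remains to show this average is non-decreasing as $r'-t$ decreases.

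Finally I would prove the monotonicity of $g$ and average it. For $B\subseteq A$, coupling a uniform injection $\psi\colon A\to V$ with its restriction $\psi|_B$ (itself a uniform injection $B\to V$), the event defining $g(A)$ is contained in the one defining $g(B)$, since it imposes the same constraints together with extra ones; hence $g(A)\leq g(B)$, i.e. $g$ is non-increasing under inclusion. A standard double-counting step then gives $\operatorname{avg}_{|A|=m} g(A)\leq \operatorname{avg}_{|B|=m-1} g(B)$ for every $m$: each $m$-set satisfies $g(A)\leq\operatorname{avg}_{B\subset A,\,|B|=m-1} g(B)$ by monotonicity, and summing over $A$ and regrouping by $B$ yields the inequality. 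Taking $m=r'-t$ shows $P(\mathcal{A}\mid T=t)$ is non-decreasing in $t$, as required. I expect the main obstacle to be the bookkeeping in step three — in particular verifying that, given $T=t$, the subset $U_W$ and the injection $\phi|_{U_V}$ are genuinely independent and uniform — rather than the monotonicity argument, which becomes transparent once $\mathcal{L}_{S'}\subseteq\binom{V}{p}$ is established.
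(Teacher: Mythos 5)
Your proof is correct, and its opening move is the same as the paper's: applying Bayes' theorem to rewrite the right-hand side and reducing the claim to the monotonicity statement $P(\bigwedge_{B_{S'}\in S_B}\bar B_{S'}\mid T=t)\leq P(\bigwedge_{B_{S'}\in S_B}\bar B_{S'}\mid T=t+1)$. From there, however, you and the paper diverge. The paper obtains this monotonicity by invoking a separate, general-purpose statement (Lemma~\ref{lem:decreasingt}) about random bijections, whose proof conditions on the image set $\sigma(A')$, uses a transposition coupling $\sigma_{xy}$ to show that trading an image point inside $B'$ for one outside can only increase the avoidance probability, and then sums this pairwise inequality over image sets with binomial-coefficient bookkeeping. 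You instead exploit the lopsidependency structure directly on the domain side: since non-adjacency in $G_B$ forces every pattern of $\mathcal L_{S'}$ into $V=X\setminus W$, any element of $U$ mapped into $W$ renders its events vacuously safe, so conditioning on $T=t$ collapses the problem to a uniform injection defined on a uniformly random $(r'-t)$-subset of $U$; monotonicity then follows from a restriction coupling (which shows the avoidance probability $g$ is non-increasing under inclusion of domains) together with a standard level-averaging double count. Both arguments rest on the same structural fact — the patterns attached to events of $S_B$ are disjoint from $\bigcup\mathcal L_S$ — which you correctly extract from the definition of $G_B$, and which the paper uses implicitly when matching the hypotheses $B_i\subset B'$ of its lemma. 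What each buys: your route is more elementary (no binomial manipulations, no computation of the conditional distribution of image sets) and makes the conditional independence given $T=t$ explicit, a point the paper never spells out; the paper's route packages the monotonicity as a standalone lemma about bijections that is independent of the particular event structure and hence reusable. The only soft spot in your write-up is the phrase asserting that $U_W$ and $\phi|_{U_V}$ are ``independent''; strictly speaking the domain of $\phi|_{U_V}$ is determined by $U_W$, and what you need (and what holds) is that conditionally on $U_W=A$ the restriction $\phi|_{U\setminus A}$ is a uniform injection into $V$, with a law not depending on $A$ — your averaging formula $P(\mathcal A\mid T=t)=\operatorname{avg}_{|A|=r'-t}g(A)$ is exactly this statement, so the argument stands.
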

  \begin{proofclaim}
    According to Bayes' Theorem applied to the right side of the equation,
    $$\frac {P(T=t+1|\bigwedge_{B_{S'}\in S_B}\bar B_{S'})} {P(T=t\mid\bigwedge_{B_{S'}\in S_B}\bar B_{S'})} = \frac {P(\bigwedge_{B_{S'}\in S_B}\bar B_{S'}\mid T=t+1)P(T=t+1)} {P(\bigwedge_{B_{S'}\in S_B}\bar B_{S'}\mid T=t)P(T=t)}$$
    We thus only need to ensure the following, which is a consequence of
    Lemma~\ref{lem:decreasingt}:
    $${P(\bigwedge_{B_{S'}\in S_B}\bar B_{S'}\mid T=t+1)} \geq {P(\bigwedge_{B_{S'}\in S_B}\bar B_{S'}\mid T=t)}$$
  \end{proofclaim}

  \noindent We are now ready to prove (\ref{eq:bound}), and we define $d_t$ for every $t$ where $P(T=t)$ is nonzero:

  $$d_t=P(T=t)-P(T=t\mid\bigwedge_{B_{S'}\in S_B}\bar B_{S'})$$

  \noindent By definition the sum $\sum_t d_t$ is null, and in order to prove
  (\ref{eq:bound}) we need only show that the sum $\sum_t d_t P(B_S\mid T=t)$ is
  nonnegative. It is a consequence of Claim~\ref{claim:ratio} that all
  nonnegative values of $d_t$ appear before all nonpositive ones, and so that
  there is a $t_0$ such that $d_t\geq 0$ iff $t\leq t_0$. As a result,
  $|\sum_{t\leq t_0}d_t| = |\sum_{t> t_0}d_t| = \frac 12$ and we can write:
  \begin{align*}
    \sum_t d_t P(B_S\mid T=t) &= \sum_{t\leq t_0} d_t P(B_S\mid T=t) + \sum_{t> t_0} d_t P(B_S\mid T=t)\\
    &\geq \frac 1 2 P(B_S\mid T=t_0) - \frac 1 2 P(B_S\mid T=t_0+1)\geq 0&\text{(by Claim~\ref{claim:decrease})}
  \end{align*}
  The second hypothesis of Lemma~\ref{llll} is that $4pd\leq 1$, which
  translates in our case to $4\frac k {\binom rp}o(r^p)=o(1)$ and is thus
  satisfied when $r$ grows large. Hence, we have that:
  $$P[\bigwedge_S\bar B_S]\geq \Big[1-2k/\binom rp\Big]^{\binom rp}=e^{-2k}+o(1)$$
\end{proof}

\begin{lemma}
  \label{lem:decreasingt}
  Let $A,B$ be two sets of size $r$, and let $\sigma:A\mapsto B$ be a random
  bijection. For every $A_1,\dots,A_k\subset A'\subset A$ and
  $B_1,\dots,B_k\subset B' \subset B$, the following function increases with $t$.
  \begin{align}
    \label{eq:shortcut}
    P\left[\bigwedge_i\left[\sigma(A_i)\neq B_i\right]\Bigm|\sigma(A')\backslash B'\text{ has cardinality }t\right]
  \end{align}
\end{lemma}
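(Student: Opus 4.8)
The plan is to reduce the statement to a purely combinatorial monotonicity about uniform random injections, and then to prove that monotonicity by a one-point deletion argument (equivalently, a double count). First I would restrict attention to $\sigma|_{A'}$: since each $A_i\subseteq A'$, both the conditioning event and the event $\bigwedge_i[\sigma(A_i)\neq B_i]$ depend only on $\sigma|_{A'}$, which is a uniformly random injection $\tau\colon A'\hookrightarrow B$. Writing $a=|A'|$, $b=|B'|$ and letting $M=\{x\in A' : \tau(x)\in B'\}$ be the set of elements of $A'$ landing in $B'$, the conditioning ``$|\sigma(A')\setminus B'|=t$'' is exactly ``$|M|=a-t$''. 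Setting $m:=a-t$, the claim ``(\ref{eq:shortcut}) increases with $t$'' becomes ``$h(m):=P\bigl[\bigwedge_i[\tau(A_i)\neq B_i]\bigm||M|=m\bigr]$ is \emph{decreasing} in $m$.''

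The key structural observation is that, because every $B_i\subseteq B'$, a forbidden coincidence $\tau(A_i)=B_i$ can occur only when $A_i\subseteq M$: otherwise $\tau(A_i)$ meets $B\setminus B'$ while $B_i$ does not. Hence the event $\bigwedge_i[\tau(A_i)\neq B_i]$ is a function of the pair $(M,\tau|_M)$ alone. Moreover, conditioned on $|M|=m$, this pair is uniformly distributed over all pairs with $M\in\binom{A'}{m}$ and $\tau|_M$ an injection $M\hookrightarrow B'$, because the number of completions (injecting the remaining $a-m$ points of $A'$ into $B\setminus B'$) depends only on $m$. Thus $h(m)=G_m/\bigl(\binom{a}{m}(b)_m\bigr)$, where $(b)_m=b(b-1)\cdots(b-m+1)$ and $G_m$ counts the \emph{good} pairs $(M,\rho)$, namely those with $|M|=m$ and $\rho(A_i)\neq B_i$ for every $i$ with $A_i\subseteq M$.

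The monotonicity then follows from one clean fact: deleting any point $x$ from a good pair $(M,\rho)$ leaves a good pair $(M\setminus\{x\},\rho|_{M\setminus\{x\}})$, since removing a point only shrinks $\{i:A_i\subseteq M\}$ and does not change $\rho$ on the surviving elements, so no new coincidence is created. I would exploit this by double counting the deletion incidences between good $(m+1)$-pairs and good $m$-pairs: each good $(m+1)$-pair has exactly $m+1$ deletions, all good, while each good $m$-pair arises from at most $(a-m)(b-m)$ one-point extensions (choose the new domain point in $A'\setminus M$ and its image in $B'\setminus\rho(M)$). This yields $(m+1)G_{m+1}\le (a-m)(b-m)\,G_m$, i.e. $G_{m+1}/G_m\le (a-m)(b-m)/(m+1)$. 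Since $\binom{a}{m+1}(b)_{m+1}\big/\bigl(\binom{a}{m}(b)_m\bigr)$ equals exactly $(a-m)(b-m)/(m+1)$, dividing gives $h(m+1)\le h(m)$, which is the desired monotonicity.

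The only real obstacle is conceptual rather than computational: one must notice that $B_i\subseteq B'$ forces the whole event to depend on $(M,\tau|_M)$ alone, and that conditioning on $|M|=m$ makes this pair uniform. Once that reduction is in place the deletion argument is essentially forced, and the single subtle point is to check that the extension factor $(a-m)(b-m)/(m+1)$ matches the binomial/falling-factorial normalization exactly, so that no slack is lost and the inequality comes out in the correct direction, namely that pushing more points of $A'$ into $B'$ can only increase the chance of realizing some forbidden $B_i$.
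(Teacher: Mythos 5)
Your proposal is correct, and it reaches the lemma by a genuinely different route than the paper. The paper conditions on the \emph{exact image set} $\sigma(A')=S$ and couples adjacent image sets: for $S_1,S_2$ with $S_1\Delta S_2=\{x,y\}$ and $x\in B\setminus B'$, composing with the transposition $\sigma_{xy}$ gives a bijection between $\{\sigma:\sigma(A')=S_1\}$ and $\{\sigma:\sigma(A')=S_2\}$ that never creates a coincidence (because $x$ lies in no $B_i$); the lemma then follows by averaging over all image sets at levels $t$ and $t+1$, which is where the displayed binomial-coefficient bookkeeping comes in. You instead condition on the pair $(M,\tau|_M)$ --- the preimage of $B'$ together with the restriction of the injection to it --- observe that the hypothesis $B_i\subseteq B'$ makes the whole event a function of this pair alone and that the pair is uniform given $|M|=m$, and then compare levels $m$ and $m+1$ by the deletion/extension double count $(m+1)G_{m+1}\le (a-m)(b-m)G_m$, whose factor matches the normalization ratio $\binom{a}{m+1}(b)_{m+1}\big/\bigl(\binom{a}{m}(b)_m\bigr)$ exactly. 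Both arguments hinge on the same underlying fact (an element mapped outside $B'$ can never take part in a forbidden coincidence), but your decomposition trades the paper's pointwise transposition coupling plus averaging for a single counting inequality; this makes the write-up shorter and avoids the delicate sum over image sets. What the paper's version buys in exchange is a slightly stronger local statement: monotonicity of the conditional probability already between any two adjacent image sets, not merely between the averaged levels $t$ and $t+1$.
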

\begin{proof}
  We implicitly assume in this proof that the conditionning event has a nonzero
  probability for $t$ and $t+1$. Let $S_1,S_2$ be two sets of cardinality $|A'|$
  with symmetric difference $S_1\Delta S_{2}=\{x,y\}$ where $x\in S_2$ is an
  element of $B\backslash B'$. Let $\sigma_{xy}$ be the permutation transposing
  $x$ and $y$. Then,
  \begin{align*}
    P\left[\bigwedge_i[\sigma(A_i)\neq B_i]\Bigm| \sigma(A') = S_1\right] &\leq P\left[\bigwedge_i[\sigma_{xy}\sigma(A_i)\neq B_i]\Bigm| \sigma(A') = S_1\right]\\
                                                                          &=    P\left[\bigwedge_i[           \sigma(A_i)\neq B_i]\Bigm| \sigma(A') = S_2\right]\\
  \end{align*}
  We can use this inequality to derive the result:
  {
    \scriptsize
    \begin{align*}
      (\ref{eq:shortcut}) &= \frac {1} {\binom {|B\backslash B'|} {t} \binom {|B'|} {|A'|-t}} \sum_{{S\subseteq B\atop |S|=|A'|}\atop |S\backslash B'|=t} P\left[\bigwedge_i\left[\sigma(A_i)\neq B_i\right]\Bigm|\sigma(A')=S\right]\\
                          &\leq \frac {1} {\binom {|B\backslash B'|} {t} \binom {|B'|} {|A'|-t}} \sum_{{S\subseteq B\atop |S|=|A'|}\atop |S\backslash B'|=t} \frac {1} {(|B\backslash B'|-t)(|A'|-t)} \sum_{{{S'\subseteq B\atop |S'|=|A'|}\atop |S'\backslash B'|=t+1}\atop |S\Delta S'|=2} P\left[\bigwedge_i\left[\sigma(A_i)\neq B_i\right]\Bigm|\sigma(A')=S'\right]\\
                          &= \frac {1} {\binom {|B\backslash B'|} {t} \binom {|B'|} {|A'|-t}} \frac {(t+1)(|B'|-|A'|+t+1)} {(|B\backslash B'|-t)(|A'|-t)} \sum_{{{S'\subseteq B\atop |S'|=|A'|}\atop |S'\backslash B'|=t+1}} P\left[\bigwedge_i\left[\sigma(A_i)\neq B_i\right]\Bigm|\sigma(A')=S'\right]\\
                          &= \frac {\binom {|B\backslash B'|} {t+1} \binom {|B'|} {|A'|-t-1}} {\binom {|B\backslash B'|} {t} \binom {|B'|} {|A'|-t}} \frac {(t+1)(|B'|-|A'|+t+1)} {(|B\backslash B'|-t)(|A'|-t)} P\left[\bigwedge_i\left[\sigma(A_i)\neq B_i\right]\Bigm|\sigma(A')\backslash B'\text{ has cardinality }t+1\right]\\
                          &= P\left[\bigwedge_i\left[\sigma(A_i)\neq B_i\right]\Bigm|\sigma(A')\backslash B'\text{ has cardinality }t+1\right]
    \end{align*}
  }
\end{proof}

\bibliography{prod}

\begin{thebibliography}{1}

\bibitem{Yuster}
Yair Caro, Douglas West, and Raphael Yuster.
\newblock Equitable hypergraph orientations.
\newblock {\em Electron. J. Combin.}, 18(1):Paper 121, 6, 2011.

\bibitem{MR1095369}
Paul Erd{\H{o}}s and Joel Spencer.
\newblock Lopsided {L}ov\'asz local lemma and {L}atin transversals.
\newblock {\em Discrete Appl. Math.}, 30(2-3):151--154, 1991.

\bibitem{Moser}
Robin~A. Moser and G\'{a}bor Tardos.
\newblock A constructive proof of the general {L}ov\'{a}sz {L}ocal {L}emma.
\newblock {\em J. ACM}, 57(2):11:1--11:15, February 2010.

\end{thebibliography}
\bibliographystyle{plain}

\end{document}